\documentclass[11pt,leqno]{article}
\usepackage[margin=1in]{geometry} 
\usepackage{amssymb,amsfonts,amsmath,bbm,mathrsfs,stmaryrd,mathtools}
\usepackage{xcolor}
\usepackage{url}

\usepackage{graphicx}

\usepackage{accents}

\usepackage{extarrows}

\usepackage[shortlabels]{enumitem}
\usepackage{tensor}

\usepackage{xr}
\usepackage[T1]{fontenc}
\usepackage[utf8]{inputenc}

\usepackage[colorlinks,
linkcolor=black!75!red,
citecolor=blue,
pdftitle={},
pdfproducer={pdfLaTeX},
pdfpagemode=None,
bookmarksopen=true,
bookmarksnumbered=true,
backref=page]{hyperref}

\usepackage{tikz}
\usetikzlibrary{arrows,calc,decorations.pathreplacing,decorations.markings,decorations.shapes,intersections,shapes.geometric,through,fit,shapes.symbols,positioning,decorations.pathmorphing}

\makeatletter
\newlength\zig@L
\newlength\zig@La
\newlength\zig@Lb

\newcommand{\xzigrightarrow}[2][]{%
  \mathrel{%
    \settowidth{\zig@La}{$\scriptstyle #2$}%
    \settowidth{\zig@Lb}{$\scriptstyle #1$}%
    \zig@L=\zig@La\relax
    \ifdim\zig@Lb>\zig@L \zig@L=\zig@Lb\fi
    \advance\zig@L by 2.2em\relax
    \tikz[baseline=-0.65ex]{%
      \draw[->,
            line cap=round,
            decorate,
            decoration={zigzag,segment length=4pt,amplitude=1.1pt}]%
        (0,0) -- (\zig@L,0)
        node[midway,above=2pt] {$\scriptstyle #2$}%
        \if\relax\detokenize{#1}\relax\else
          node[midway,below=2pt] {$\scriptstyle #1$}%
        \fi
      ;
    }%
  }%
}
\makeatother

\makeatletter
\newcommand{\squigjoin}{1mu} 

\def\sqleft@{\sim}                    
\def\sqmid@{\sim\mkern-\squigjoin}    

\def\rightsquigarrowfill@{%
  \arrowfill@{\sqleft@}{\sqmid@}{\mkern-4mu\succ}%
}

\newcommand{\xrightsquigarrow}[2][]{%
  \ext@arrow 0359\rightsquigarrowfill@{#1}{#2}%
}
\makeatother

\makeatletter
\newcommand*\circled[1]{\tikz[baseline=(char.base)]{
    \node[shape=circle, draw, inner sep=0pt, 
    minimum height={\f@size},] (char) {\vphantom{WAH1g}#1};}}
\makeatother

\makeatletter
\DeclareRobustCommand\widecheck[1]{{\mathpalette\@widecheck{#1}}}
\def\@widecheck#1#2{%
    \setbox\z@\hbox{\m@th$#1#2$}%
    \setbox\tw@\hbox{\m@th$#1%
       \widehat{%
          \vrule\@width\z@\@height\ht\z@
          \vrule\@height\z@\@width\wd\z@}$}%
    \dp\tw@-\ht\z@
    \@tempdima\ht\z@ \advance\@tempdima2\ht\tw@ \divide\@tempdima\thr@@
    \setbox\tw@\hbox{%
       \raise\@tempdima\hbox{\scalebox{1}[-1]{\lower\@tempdima\box
\tw@}}}%
    {\ooalign{\box\tw@ \cr \box\z@}}}
\makeatother

\usepackage{braket}

\usepackage[amsmath,thmmarks,hyperref]{ntheorem}
\usepackage{cleveref}

\newcommand\nthalias[1]{\AddToHook{env/#1/begin}{\crefalias{lemma}{#1}}}

\nthalias{definition}
\nthalias{example}
\nthalias{examples}
\nthalias{remark}
\nthalias{remarks}
\nthalias{convention}
\nthalias{notation}
\nthalias{construction}
\nthalias{sketch}
\nthalias{theoremN}
\nthalias{propositionN}
\nthalias{corollaryN}
\nthalias{lemma}
\nthalias{proposition}
\nthalias{corollary}
\nthalias{theorem}
\nthalias{conjecture}
\nthalias{question}
\nthalias{assumption}

\creflabelformat{enumi}{#2#1#3}

\crefname{section}{Section}{Sections}
\crefformat{section}{#2Section~#1#3} 
\Crefformat{section}{#2Section~#1#3} 

\crefname{subsection}{\S}{\S\S}
\AtBeginDocument{%
  \crefformat{subsection}{#2\S#1#3}%
  \Crefformat{subsection}{#2\S#1#3}%
}

\crefname{subsubsection}{\S}{\S\S}
\AtBeginDocument{%
  \crefformat{subsubsection}{#2\S#1#3}%
  \Crefformat{subsubsection}{#2\S#1#3}%
}

\theoremstyle{plain}

\newtheorem{lemma}{Lemma}[section]

\newtheorem{corollary}[lemma]{Corollary}
\newtheorem{theorem}[lemma]{Theorem}

\theoremstyle{plain}
\theoremnumbering{Alph}

\theoremstyle{plain}
\theorembodyfont{\upshape}
\theoremsymbol{\ensuremath{\blacklozenge}}

\newtheorem{example}[lemma]{Example}

\newtheorem{remark}[lemma]{Remark}
\newtheorem{remarks}[lemma]{Remarks}

\newtheorem{notation}[lemma]{Notation}
\newtheorem{construction}[lemma]{Construction}

\crefname{definition}{definition}{definitions}
\crefformat{definition}{#2definition~#1#3} 
\Crefformat{definition}{#2Definition~#1#3} 

\crefname{ex}{example}{examples}
\crefformat{example}{#2example~#1#3} 
\Crefformat{example}{#2Example~#1#3} 

\crefname{exs}{example}{examples}
\crefformat{examples}{#2example~#1#3} 
\Crefformat{examples}{#2Example~#1#3} 

\crefname{remark}{remark}{remarks}
\crefformat{remark}{#2remark~#1#3} 
\Crefformat{remark}{#2Remark~#1#3} 

\crefname{remarks}{remark}{remarks}
\crefformat{remarks}{#2remark~#1#3} 
\Crefformat{remarks}{#2Remark~#1#3} 

\crefname{convention}{convention}{conventions}
\crefformat{convention}{#2convention~#1#3} 
\Crefformat{convention}{#2Convention~#1#3} 

\crefname{notation}{notation}{notations}
\crefformat{notation}{#2notation~#1#3} 
\Crefformat{notation}{#2Notation~#1#3} 

\crefname{table}{table}{tables}
\crefformat{table}{#2table~#1#3} 
\Crefformat{table}{#2Table~#1#3}

\crefname{lemma}{lemma}{lemmas}
\crefformat{lemma}{#2lemma~#1#3} 
\Crefformat{lemma}{#2Lemma~#1#3} 

\crefname{proposition}{proposition}{propositions}
\crefformat{proposition}{#2proposition~#1#3} 
\Crefformat{proposition}{#2Proposition~#1#3} 

\crefname{propositionN}{proposition}{propositions}
\crefformat{propositionN}{#2proposition~#1#3} 
\Crefformat{propositionN}{#2Proposition~#1#3} 

\crefname{corollary}{corollary}{corollaries}
\crefformat{corollary}{#2corollary~#1#3} 
\Crefformat{corollary}{#2Corollary~#1#3} 

\crefname{corollaryN}{corollary}{corollaries}
\crefformat{corollaryN}{#2corollary~#1#3} 
\Crefformat{corollaryN}{#2Corollary~#1#3} 

\crefname{theorem}{theorem}{theorems}
\crefformat{theorem}{#2theorem~#1#3} 
\Crefformat{theorem}{#2Theorem~#1#3} 

\crefname{theoremN}{theorem}{theorems}
\crefformat{theoremN}{#2theorem~#1#3} 
\Crefformat{theoremN}{#2Theorem~#1#3} 

\crefname{enumi}{}{}
\crefformat{enumi}{#2#1#3}
\Crefformat{enumi}{#2#1#3}

\crefname{assumption}{assumption}{Assumptions}
\crefformat{assumption}{#2assumption~#1#3} 
\Crefformat{assumption}{#2Assumption~#1#3} 

\crefname{construction}{construction}{Constructions}
\crefformat{construction}{#2construction~#1#3} 
\Crefformat{construction}{#2Construction~#1#3} 

\crefname{sketch}{sketch}{Sketches}
\crefformat{sketch}{#2sketch~#1#3} 
\Crefformat{sketch}{#2Sketch~#1#3} 

\crefname{question}{question}{Questions}
\crefformat{question}{#2question~#1#3} 
\Crefformat{question}{#2Question~#1#3} 

\crefname{equation}{}{}
\crefformat{equation}{(#2#1#3)} 
\Crefformat{equation}{(#2#1#3)}

\numberwithin{equation}{section}

\theoremstyle{nonumberplain}
\theoremsymbol{\ensuremath{\blacksquare}}

\newtheorem{proof}{Proof}
\newcommand\pf[1]{\newtheorem{#1}{Proof of \Cref{#1}}}

\newcommand\bB{{\mathbb B}}

\newcommand\bG{{\mathbb G}}
\newcommand\bH{{\mathbb H}}

\newcommand\bK{{\mathbb K}}
\newcommand\bL{{\mathbb L}}

\newcommand\bP{{\mathbb P}}

\newcommand\bU{{\mathbb U}}

\newcommand\bZ{{\mathbb Z}}

\newcommand\cC{{\mathcal C}}
\newcommand\cD{{\mathcal D}}
\newcommand\cE{{\mathcal E}}
\newcommand\cF{{\mathcal F}}

\newcommand\cM{{\mathcal M}}

\newcommand\cO{{\mathcal O}}

\newcommand\cR{{\mathcal R}}

\newcommand\fa{{\mathfrak a}}
\newcommand\fb{{\mathfrak b}}

\newcommand\fe{{\mathfrak e}}

\newcommand\fg{{\mathfrak g}}
\newcommand\fh{{\mathfrak h}}

\newcommand\fl{{\mathfrak l}}

\newcommand\fp{{\mathfrak p}}
\newcommand\fq{{\mathfrak q}}

\newcommand\fu{{\mathfrak u}}

\newcommand\fz{{\mathfrak z}}

\DeclareMathOperator{\id}{id}

\DeclareMathOperator{\Hom}{\mathrm{Hom}}
\DeclareMathOperator{\Ext}{\mathrm{Ext}}
\DeclareMathOperator{\Aut}{\mathrm{Aut}}

\newcommand{\cat}[1]{\textsc{#1}}

\newcommand{\qedhere}{\mbox{}\hfill\ensuremath{\blacksquare}}

\newcommand{\comment}[1]{}

\newcommand{\xrightarrowdbl}[2][]{%
  \xrightarrow[#1]{#2}\mathrel{\mkern-14mu}\rightarrow
}

\title{Coincident Poisson structures on principal-bundle moduli spaces}
\author{Alexandru Chirvasitu}

\begin{document}

\date{}

\newcommand{\Addresses}{{
  \bigskip
  \footnotesize

  \textsc{Department of Mathematics, University at Buffalo}
  \par\nopagebreak
  \textsc{Buffalo, NY 14260-2900, USA}  
  \par\nopagebreak
  \textit{E-mail address}: \texttt{achirvas@buffalo.edu}

}}

\maketitle

\begin{abstract}
  Consider a reductive complex algebraic group $\mathbb{G}$ equipped with an action by a linearly reductive affine group scheme $\mathbb{K}$. The extension of $\mathfrak{p}^*$ by $\mathfrak{p}$ induced by an $(\mathbb{K},\mathfrak{g})$-invariant symmetric non-degenerate bilinear form on $\mathfrak{g}:=Lie(\mathbb{G})$, for a $\mathbb{K}$-invariant parabolic $\mathbb{P}\le \mathbb{G}$, is $\mathbb{K}$-equivariantly isomorphic to the extension obtained via the standard bialgebra structure attached to a $\mathbb{K}$-invariant Cartan/Borel pair $\mathbb{H}\le \mathbb{B}\le \mathbb{P}\le \mathbb{G}$ and the same bilinear form. Associating bundle extensions on an elliptic curve $E$ to said $\mathfrak{p}$-module extensions, this identifies Poisson structures on the smooth locus of the principal-$\mathbb{P}$-bundle moduli space over $E$ respectively defined by Balduzzi (using the former extension) and Feigin-Odesskii (via the standard bialgebra structure). This in particular verifies Feigin-Odesskii's identification of the bialgebra-induced symplectic leaves with the loci of bundles mutually isomorphic after forgetting structure along $\mathbb{P}\le \mathbb{G}$.
\end{abstract}

\noindent \emph{Key words:
  Levi subgroup;
  Lie algebra cohomology;
  Lie bialgebra;
  Poisson structure;
  parabolic subgroup;
  principal bundle;
  reductive group;
  symplectic leaf
}

\vspace{.5cm}

\noindent{MSC 2020: 14H60; 17B56; 17B62; 53D17; 17B55; 53D30; 20G20; 14H52


}


\section*{Introduction}

Consider a connected reductive complex linear algebraic group $\bG$ with attached Lie algebra $\fg$. For a parabolic subgroup $\bP\le \bG$, \cite[\S 3]{MR2396472} equips the smooth locus of the moduli space $\cat{Bun}_E^{\bP}$ of principal $\bP$-bundles on an elliptic curve $E$ with a \emph{Poisson structure} (\cite[\S 1.2.1, Definition 1.5]{lgpv_poiss_2013}, \cite[Definition 2.7]{cfm_lec-poiss_2021}), construed there in several alternative ways.

On the other hand, Poisson structures on principal-bundle moduli spaces over elliptic curves as introduced in Feigin and Odesskii's work \cite{FO98} have received attention in a growing body of literature, of which \cite{MR5018405,MR4956460,MR5014131,HP1,hua2023elliptic,MR4855099,MR4214399} (with their references) will provide a sampling. 

\cite{MR2396472,FO98} both define the said Poisson structures by associating $\bP$-bundles on $E$ to $(\fp:=Lie(\bP))$-module extensions of the coadjoint representation $\fp^*$ by the adjoint $\fp$, with the difference lying in how those extensions are constructed (\Cref{con:bunp.poiss.no.bialg,con:bunp.poiss.bialg} below provide more expansive reminders):
\begin{itemize}[wide]
\item \cite[(3.2)]{MR2396472} pulls back $\fg$ itself, as an extension of $\fg/\fp$ by $\fp$, along the map $\fp^*\to \fg/\fp$ induced by a symmetric non-degenerate invariant bilinear form on $\fg$ fixed throughout; 

\item while the somewhat terse comment on \cite[p.67]{FO98} simply selects as the requisite 1-cocycle $\fp\to \fp^{\otimes 2}$ the restriction to $\fp$ of what the authors refer to as the \emph{standard bialgebra structure} on $\fg$. 
\end{itemize}
The latter phrase is amenable to some interpretation, but an examination of how it is employed in the Kac-Moody context of \cite[Example 1.3.8]{cp_qg} shows that what is needed in order to alight on such a structure are Cartan/Borel subalgebras $\fh\le \fb\le \fg$ and a symmetric non-degenerate ad-invariant bilinear form on $\fg$; hence the phrase \emph{$(\fb,\fh,\Braket{-\mid-})$-standard} featuring in the sequel. 

With a view towards potential ramifications pertinent to \emph{equivariant principal bundles} (in the sense, say, of \cite[\S I.8]{td_transf-gp} or \cite[Chapter VII]{may_eq-hotop_1996}), to be pursued elsewhere, it will be convenient to equivariantize the discussion by equipping all objects in sight with actions by a \emph{linearly reductive} \cite[Definition 1.4]{fkm} affine group scheme $\bK$: $\bG$ carries such an action with $\bB\le \bP\le \bG$ and $\Braket{-\mid-}$ $\bK$-invariant. Whatever auxiliary structures emerge as necessary (e.g. Levi factors, opposite parabolic/Borel subgroups, etc.) will be available $\bK$-equivariantly, not by assumption, but rather as a consequence of the assumed linear reductivity. 

\begin{theorem}\label{th:same.poiss}
  Let $\bK$ be a linearly reductive affine group $\Bbbk$-scheme over an algebraically closed characteristic-0 field and consider
  \begin{itemize}[wide]
  \item a $\bK$-equivariant chain of embeddings $\bB\le \bP\le \bG$ of Borel and parabolic subgroups into a reductive connected linear algebraic $\Bbbk$-group acted upon by $\bK$;
  \item and a $\bK$-$\fg$-invariant symmetric non-degenerate bilinear form $\Braket{-\mid -}$ on $\fg:=Lie(\bG)\ge \fp:=Lie(\bP)$. 
  \end{itemize}
  The top-row pullback $\fp$-module extension
  \begin{equation}\label{eq:plbk.ext}
    \begin{tikzpicture}[>=stealth,auto,baseline=(current  bounding  box.center)]
      \path[anchor=base] 
      (0,0) node (0l) {$0$}
      +(8,0) node (0r) {$0$,}
      +(1,0) node (p) {$\fp$}
      +(3,-.2) node (g) {$\fg$}
      +(5,-.2) node (uast) {$\fu^*$}
      +(7,-.3) node (gp) {$\fg/\fp$}
      +(2,.5) node (e) {$\fe'$}
      +(4,.5) node (past) {$\fp^*$}
      ;
      \draw[->] (0l) to[bend left=0] node[pos=.5,auto] {$\scriptstyle $} (p);
      \draw[->] (p) to[bend right=6] node[pos=.5,auto] {$\scriptstyle $} (g);
      \draw[->] (g) to[bend right=6] node[pos=.5,auto,swap] {$\scriptstyle $} (uast);
      \draw[->] (uast) to[bend right=6] node[pos=.5,auto] {$\scriptstyle \cong$} (gp);
      \draw[->] (g) to[bend right=20] node[pos=.5,auto,swap] {$\scriptstyle $} (gp);
      \draw[->] (p) to[bend left=6] node[pos=.5,auto] {$\scriptstyle $} (e);
      \draw[->] (e) to[bend right=10] node[pos=.5,auto] {$\scriptstyle $} (g);
      \draw[->>] (past) to[bend right=10] node[pos=.5,auto] {$\scriptstyle $} (uast);
      \draw[->] (e) to[bend left=6] node[pos=.5,auto] {$\scriptstyle $} (past);
      \draw[->] (past) to[bend left=6] node[pos=.5,auto] {$\scriptstyle $} (0r);
      \draw[->] (gp) to[bend right=6] node[pos=.5,auto] {$\scriptstyle $} (0r);
    \end{tikzpicture}
  \end{equation}
  with the double-arrow surjection dual to the embedding $\text{nilradical}=:\fu\lhook\joinrel\to \fp$ and the isomorphism implemented by $\Braket{-\mid -}$ is $\bK$-equivariantly isomorphic to that obtained via
  \begin{equation*}
    \Ext_{\fp}(\fp^*,\fp)
    \cong
    H^1\left(\fp,\Hom(\fp^*,\fp)\right)
    \cong
    H^1\left(\fp,\fp^{\otimes 2}\right)
    \in
    \cat{Rep}(\bK)
  \end{equation*}
  from the restriction to $\fp$ of the $(\fb,\fh,\Braket{-\mid-})$-standard bialgebra structure on $\fg$ for any $\bK$-invariant $\fh\le \fb\le \fp$. 
\end{theorem}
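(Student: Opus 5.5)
The plan is to work at the level of explicit $1$-cocycles $\fp\to\Hom(\fp^*,\fp)\cong\fp^{\otimes 2}$, and to show that the two cocycles (the one classifying \Cref{eq:plbk.ext} and $x\mapsto\delta(x)$ for the standard cobracket) differ by the coboundary of a $\bK$-invariant tensor in $\fp^{\otimes 2}$. A $\bK$-invariant coboundary yields a $\bK$-equivariant isomorphism of extensions, which is the claim.

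\textbf{Step 1: choose the auxiliary data $\bK$-equivariantly.} First I will use linear reductivity of $\bK$ to obtain a $\bK$-invariant Levi factor $\fl$ of $\fp$ containing $\fh$, and a $\bK$-invariant opposite parabolic $\fp^-\supseteq\fb^-\supseteq\fh$ with nilradical $\fu^-$. Then
\begin{equation*}
  \fg=\fu^-\oplus\fl\oplus\fu
\end{equation*}
$\bK$-equivariantly. Under $\Braket{-\mid-}$ we have $\fp^\perp=\fu$, so $\fp^*\cong\fg/\fu\cong\fu^-\oplus\fl$. The form pairs $\fu$ perfectly with $\fu^-$ and $\fl$ with itself.

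\textbf{Step 2: a cocycle for the pullback.} Next I will compute a cocycle for $\fe'$. Take the $\bK$-equivariant linear section $s:\fp^*\to\fe'$ that sends $\varphi\in\fp^*\cong\fu^-\oplus\fl$ to $(\pi_{\fu^-}\varphi,\varphi)$, where $\pi_{\fu^-}\varphi\in\fu^-\subset\fg$ lifts the image of $\varphi$ in $\fg/\fp\cong\fu^-$. The associated cocycle is
\begin{equation*}
  c'(x)(\varphi)=x\cdot s(\varphi)-s(x\cdot\varphi)\in\fp .
\end{equation*}
Unwinding this via the form, $c'(x)$ should be the $\fp\otimes\fp$-component of $\mathrm{ad}_x(t)$, where $t=\sum_{\alpha\in\Phi(\fu)}e_\alpha\otimes f_\alpha$ is the canonical element of $\fu\otimes\fu^-$ (dual bases for $\Braket{-\mid-}$). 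Using invariance of the full Casimir
\begin{equation*}
  \Omega=t+t^{op}+\Omega_{\fl},
\end{equation*}
I will rewrite this projected expression as an honest $\mathrm{ad}_x$ of a tensor, up to terms whose $\fp^{\otimes 2}$-component vanishes.

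\textbf{Step 3: compare with the standard cobracket.} The standard cobracket is $\delta(x)=\mathrm{ad}_x(r)$ with
\begin{equation*}
  r=\sum_{\alpha>0}e_\alpha\otimes f_\alpha+\tfrac12\Omega_{\fh}.
\end{equation*}
Splitting the positive roots into those of $\fu$ and those of $\fl$ gives
\begin{equation*}
  r=t+r_{\fl},\qquad r_{\fl}=\sum_{\alpha\in\Phi^+(\fl)}e_\alpha\otimes f_\alpha+\tfrac12\Omega_{\fh}\in\fl\otimes\fl\subset\fp^{\otimes2}.
\end{equation*}
Hence $\delta|_{\fp}$ and $x\mapsto\mathrm{ad}_x t$ differ by the coboundary $d(r_{\fl})$. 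The tensor $r_{\fl}$ is $\bK$-invariant: $\Omega_{\fh}$ is the canonical element of the invariant form on the $\bK$-invariant $\fh$. The positive part is the canonical pairing element of $(\fu_{\fb}\cap\fl)\otimes(\fu_{\fb^-}\cap\fl)$, and both factors are $\bK$-invariant because $\fb$ and $\fb^-$ are. Along the way I will check that $\delta(\fp)\subseteq\fp^{\otimes2}$, e.g. $\delta(f_\alpha)\propto f_\alpha\wedge h_\alpha$ for $\alpha\in\Phi(\fl)$, so that the restriction makes sense.

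\textbf{Main obstacle.} I expect the hardest part to be Step 2 combined with the matching in Step 3. The pullback cocycle is naturally only the $\fp$-component of a commutator. One must check precisely, with the correct tensor-flip and sign conventions for $\Hom(\fp^*,\fp)\cong\fp^{\otimes2}$, that this projection coincides with $\mathrm{ad}_x(t)$, or with $-\mathrm{ad}_x(t^{op}+\Omega_{\fl})$, modulo $\fp$-coboundaries. I would first verify this on root vectors $x=e_\beta$ and $x=f_\beta$ for $\beta\in\Phi(\fl)$, and on $x\in\fh$, where everything is explicit. If a sign or flip discrepancy appears, I would absorb it into the choice of identification, or into the harmless automorphism $-1$ on $\fp^*$.
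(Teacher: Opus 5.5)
Your proposal is correct, and it takes a genuinely different route from the paper.

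\textbf{How the routes differ.} The paper never writes down an $r$-matrix. It stays inside the Manin triple $(\fg\times\fg,\Delta\fg,\fq)$ that defines the standard structure, and replaces $\fq$ by the $\bK$-invariant vector-space complement $\fq_{\fp,\fl}=\{(x,y)\in\fp_-\times\fp : x+y\in\fu_-\oplus\fu\}$ of $\Delta\fg$, which interpolates between the anti-diagonal and $\fq$. It notes that the annihilator of $\Delta\fp$ in $\fq_{\fp,\fl}$ is $\{0\}\times\fu$, reads off the cocycle $x\mapsto\pi[x,y]$, and matches it with the pullback cocycle $[x,y_{\fu_-}]_\fp$, which is your $c'$. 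You instead use $\delta=\mathrm{ad}(r)$ and the decomposition $r=t+r_\fl$, so everything reduces to showing $c'=\mathrm{ad}(t)$ and that $r_\fl$ is $\bK$-invariant.

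\textbf{Your main obstacle dissolves.} For $x\in\fp$ you have $\mathrm{ad}_x(t)\in\fu\otimes\fp$ on the nose. Indeed, $t$ maps to the $\fp$-invariant canonical element of $\fu\otimes(\fg/\fp)\cong\fu\otimes\fu^*$, so the $\fu\otimes\fu_-$-component of $\mathrm{ad}_x(t)$ vanishes. Hence no projection is needed, and $c'(x)=\mathrm{ad}_x(t)$ exactly under the convention $a\otimes b\mapsto(\varphi\mapsto\varphi(a)b)$. The same convention turns the paper's $\fq$-splitting cocycle into exactly $\mathrm{ad}_x(r)$ with your normalization $\tfrac12\Omega_\fh$, so no sign or flip fudge is required.

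\textbf{Other simplifications.} The decomposition also gives $\delta(\fp)\subseteq\fp^{\otimes 2}$ for free, since $\delta(x)=\mathrm{ad}_x(t)+\mathrm{ad}_x(r_\fl)$. So you can drop the root-vector checks. They would not have sufficed anyway: your test set misses the root vectors in $\fu$, and $\delta(f_\alpha)\propto f_\alpha\wedge h_\alpha$ holds only for simple $\alpha$. Likewise, $\bK$-invariance of $r_\fl=r-t$ is immediate once you note that $\fb_-$ is $\bK$-invariant, being the unique Borel opposite $\fb$ containing $\fh$.

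\textbf{What each route buys.} Yours produces an explicit $\bK$-invariant coboundary. This matters, because the complement $\fq_{\fp,\fl}$ actually yields $\mathrm{ad}_x(t+\tfrac12\Omega_\fl)$. That agrees with $c'$ only modulo the coboundary of $\tfrac12\Omega_\fl\in\fl\otimes\fl$. Already for $\mathfrak{sl}_2$ with $\fp=\fb$, one gets $\pi[(e,e),(f,0)]=\tfrac12 h$, whereas $[e,f]_\fb=h$. So the paper's ``can thus be recovered as'' silently needs a correction of exactly your type. In exchange, the paper's route works directly from the Manin-triple definition of ``standard'' and never has to identify that structure with a coboundary bialgebra.
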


One (non-equivariant) consequence of this, \Cref{cor:fo.parab.sympl.lvs}, confirms the cursory remark on \cite[p.67]{FO98} identifying the symplectic leaves of the bialgebra-induced Poisson structure with the loci of bundles mutually isomorphic after forgetting the parabolic structure along the embedding $\bP\lhook\joinrel\to \bG$: a theme of some interest in several of the sources cited above (e.g. \cite[Theorem 6.6]{MR5018405}, \cite[Theorem 1.17]{MR4956460}).

\subsection*{Acknowledgments}

I am grateful for helpful exchanges with R. Kanda, M. Matviichuk and S. P. Smith.


\section{The comparison, and symplectic leaves as moduli-space map fibers}\label{se:coinc.pois}

We work over algebraically closed fields $\Bbbk$ of characteristic zero, with some of these strictures occasionally relaxed. 

\begin{remark}\label{re:rdctv.vs.ss}
  Some caution may be in order in perusing some of the cited sources, having to do with the distinction between semisimple and reductive Lie algebras (or algebraic groups):
  \begin{itemize}[wide]
  \item On the one hand, the informal discussion following \cite[Theorem 1]{FO98} begins by assuming $\bG$ semisimple, but the construction is intended as applicable, say, to $GL(n)$. 

  \item On the other hand, \cite[\S 2]{MR2396472} explicitly fixes a complex reductive $\bG$ and \cite[\S 3]{MR2396472} then makes the identification $\fg\cong \fg^*$ via the \emph{Killing form} of $\fg$; however, that form's non-degeneracy entails \cite[\S 5.1, Theorem]{hmph_1972} semisimplicity. Presumably, the construction must allow for other choices of invariant non-degenerate forms on $\fg$.
  \end{itemize}
\end{remark}

\Cref{con:bunp.poiss.no.bialg} summarizes the Poisson structure discussed in \cite[\S 3.1]{MR2396472}, while \Cref{con:bunp.poiss.bialg} recalls the slightly different (at least superficially) approach adopted in \cite[post Theorem 1, p.67]{FO98}. The two branches of the discussion share some initial data and conventions.

\begin{notation}\label{not:gp}
  \begin{enumerate}[(1),wide]
  \item\label{item:not:gp:gpb} $\bG$ will always denote a connected \emph{reductive} (\cite[\S 19.5]{hmph_lin-alg-gps_1981}, \cite[\S IV.11.21]{brl}) linear algebraic $\Bbbk$-group, with
    \begin{equation*}
      \bU
      :\xlongequal[\text{\cite[\S IV.11.21]{brl}}]{\text{\emph{unipotent radical}}}
      \cR_u\bP
      \le
      \bP
      \overset{\text{\emph{parabolic} \cite[\S 29.3, p.179]{hmph_lin-alg-gps_1981}}}{\le}
      \bG.
    \end{equation*}
    $\bB$ or $\bP_0$ stand for a \emph{Borel} subgroup contained in $\bP$. German letters denote corresponding Lie algebras: $\fg:=Lie(\bG)$, $\fu:=Lie(\bU)$, $\fb:=Lie(\bB)$ etc. ``-'' subscripts denote \emph{opposite} objects in the sense of \cite[\S IV.14.20]{brl}: $\bP_-$ is the unique \cite[Theorem IV.14.21]{brl} parabolic opposite $\bP$ containing a fixed \emph{Levi subgroup} $\bL\le \bP$, $\bU_-:=\cR_u\bP_-$, and similarly for Lie algebras. Finally, $\bH\le \bB$ is a maximal (algebraic) torus in $\bG$ with corresponding Lie algebra $\fh\le \fb$. 
    
    The various items will (mostly) carry compatible actions by a fixed linearly reductive affine group $\Bbbk$-scheme $\bK$, as in the statement of \Cref{th:same.poiss}.
    
  \item $E$ is an elliptic $\Bbbk$-curve, and given
    \begin{itemize}[wide]
    \item a principal $\bP$-bundle $\cE\xrightarrowdbl{} E$;
    \item and a $\bP$-representation $\rho:\bP\circlearrowright V$,
    \end{itemize}
    $\cE[V]:=\cE[\rho]$ both denote the associated \cite[Definition 5.3.1]{hjjm_bdle} vector bundle on $E$ based on that data. Unless specified otherwise, $\cE[\fp]$ refers to the adjoint representation $\bP\circlearrowright \fp$.

  \item Pairings $\Braket{-\mid -}$ have contextual meaning: the usual evaluation of a vector space against functionals thereon when the arguments reside in mutually dual vector spaces, or a fixed ad-invariant symmetric non-degenerate bilinear form on $\fg$ (one always exists, e.g. extending the Killing form on the semisimple derived subalgebra $\fg'$ with some freedom of choice on the center $\fz(\fg)$).      
  \end{enumerate}
\end{notation}

\begin{remark}\label{re:comods}
  Given the $\bK$-actions on $\bG$, $\bB$, etc., the Lie algebras involved will be internal to the category $\cM^H$ of $H$-comodules (\emph{closed symmetric monoidal} in the sense of \cite[Definition 6.1.3]{brcx_hndbk-2}) for the commutative \emph{cosemisimple} \cite[post Theorem 3.1.5]{dnr} Hopf algebra $H=\cO(\bK)$ of regular functions on $\bK$. $\fg$ thus carries an $H$-comodule structure
    \begin{equation*}
      \fg\ni x
      \xmapsto{\quad}
      x_0\otimes x_1\in \fg\otimes H
    \end{equation*}
    in \emph{Sweedler notation} \cite[\S 1.1.8]{dnr}, compatible with the Lie bracket in the guessable sense that that bracket is an $H$-comodule morphism $\fg^{\otimes 2}\to \fg$.
\end{remark}

$\bK$-equivariance will not pose any difficulties regarding $\Braket{-\mid -}$:

\begin{lemma}\label{le:exists.inv.bracket}
  If $\bK$ is a linearly reductive affine group $\Bbbk$-scheme acting on the connected reductive linear algebraic group $\bG$, $\fg:=Lie(\bG)$ carries a symmetric, non-degenerate, $\bK$-$\fg$-invariant bilinear form $\Braket{-\mid -}$. 
\end{lemma}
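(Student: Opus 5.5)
Start from an arbitrary symmetric, non-degenerate, ad-invariant form $\beta$ on $\fg$. One exists by \Cref{not:gp}: take the Killing form on the semisimple derived subalgebra $\fg'$ and any non-degenerate symmetric form on the center $\fz(\fg)$. The plan is to average $\beta$ over $\bK$ without destroying non-degeneracy. Averaging $\beta$ itself is risky, because averages of non-degenerate forms can degenerate. So I would instead exploit the structure of the space of invariant forms.

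First, because $\bK$ acts on $\bG$ by algebraic group automorphisms, it acts on $\fg$ by Lie algebra automorphisms. Hence it preserves the decomposition $\fg=\fg'\oplus\fz(\fg)$, since both summands are intrinsically defined. It also permutes the simple ideals $\fg_1,\dots,\fg_r$ of $\fg'$, and it preserves the Killing form $\kappa$ of $\fg'$, since automorphisms preserve Killing forms. On $\fg'$ I would therefore use $\kappa$ directly. It is $\fg'$-invariant, $\bK$-invariant and non-degenerate, and after extension by zero on $\fz$ it is $\fg$-invariant, because $\fz$ acts trivially.

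This reduces the problem to the center $\fz:=\fz(\fg)$, a finite-dimensional $\bK$-representation (an $H$-comodule in the sense of \Cref{re:comods}). Any symmetric bilinear form on $\fz$, extended by zero on $\fg'$, is automatically $\fg$-invariant. Indeed, $[\fg,\fg]\subseteq\fg'$ and $\fg'\perp\fz$, so the invariance identity $\beta([x,y],z)+\beta(y,[x,z])=0$ only involves the $\fg'$-part, where it holds by invariance of $\kappa$. So it remains to find a $\bK$-invariant non-degenerate symmetric form on $\fz$.

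Linear reductivity handles this. By cosemisimplicity of $H=\cO(\bK)$, the representation $\fz$ decomposes into irreducible $\bK$-representations $\fz\cong\bigoplus_i V_i$. For each isotypic type, pair $V$ with $V^*$: the symmetric form on $V\oplus V^*$ given by $\langle (v,f),(w,g)\rangle=f(w)+g(v)$ is $\bK$-invariant and non-degenerate. Such a hyperbolic form only exists if the type of $V^*$ occurs with the same multiplicity as that of $V$, so the main obstacle is showing $\fz\cong\fz^*$ as $\bK$-representations. I would obtain this from the torus structure. $\fz=Lie(Z(\bG)^\circ)$ and $Z(\bG)^\circ$ is a torus, so $\fz\cong X_*(Z^\circ)\otimes\Bbbk$, and $\bK$ acts on this through a finite-type action on the lattice $X_*(Z^\circ)$. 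The image of the action on the lattice is a subgroup of $GL_n(\bZ)$ that is the image of a linearly reductive group scheme; I would argue it is finite. On the rational form $X_*\otimes\mathbb{Q}$, averaging any positive-definite form over a finite group gives an invariant positive-definite, hence non-degenerate, form. Extending scalars then gives the required invariant form on $\fz$.

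If the finiteness argument proves troublesome, the fallback is to note that the identity component of $\bK$ acts trivially on the lattice $X_*$, since the lattice is discrete. The action then factors through $\pi_0(\bK)$, which is finite.

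Finally, set $\Braket{-\mid-}:=\kappa\oplus\gamma$ on $\fg'\oplus\fz$, where $\gamma$ is the invariant form on $\fz$ just constructed. This form is symmetric, non-degenerate, $\fg$-invariant and $\bK$-invariant, as required.
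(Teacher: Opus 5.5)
Your argument is correct and is essentially the paper's. Both split $\fg=\fg'\oplus\fz(\fg)$ and use the Killing form on $\fg'$, which is $\bK$-invariant because automorphisms preserve it. On the central torus, $\bK$ acts through a finite subgroup of $GL_n(\bZ)$ (the paper cites Borel's rigidity result where you argue via discreteness of the cocharacter lattice), so averaging a positive-definite rational form gives the invariant form.

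Two side remarks. The hyperbolic $V\oplus V^*$ detour is unnecessary; self-duality of $\fz$ alone would not suffice there anyway, because of self-dual irreducibles carrying only symplectic invariant forms. Also, if $\bK$ is not of finite type, replace ``$\pi_0(\bK)$ is finite'' by the observation that the quasi-compact $\bK$ can only hit finitely many points of the constant group scheme $GL_n(\bZ)$.
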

\begin{proof}
  $\fg$ splits \cite[\S 19.1, Proposition (a)]{hmph_1972} as the direct sum $\fg'\oplus \fz(\fg)$ of its derived subalgebra and center. On $\fg'$ the Killing form is non-degenerate and canonical, so $\bK$-invariant. Its kernel is precisely $\fz(\fg)$, so it remains to verify the claim for abelian $\bG$ (i.e. tori). In that case the $\bK$-action on $\fg$ factors through a finite subgroup of $\Aut(\bG)\cong GL\left(\dim_{\Bbbk} \bG,\bZ\right)$ (e.g. by \cite[Corollary III.8.3]{brl}). $\fg$-invariance is now no longer an issue; to conclude, observe that finite-group representations definable over $\bZ$ admit symmetric non-degenerate bilinear forms.
\end{proof}

$\bK$-invariance for the various objects involved perhaps requires some comment. Mostow's celebrated result \cite[first two Theorems, pp.200-201]{MR92928}, (essentially) to the effect that characteristic-0 linear algebraic groups have Levi factors, has equivariant analogues both in the algebraic-group context \cite[Proposition 6.1]{MR651417} and that of $H$-comodule Lie-algebras \cite[Theorem 7.4]{MR3115171}. One version, prescribing invariant Cartan data, is as follows. 

\begin{lemma}\label{le:k.inv.levi}
  Suppose the linearly reductive affine group $\Bbbk$-scheme $\bK$ acts on the connected reductive $\Bbbk$-group $\bG$ leaving a parabolic $\bP\le \bG$ invariant.
  \begin{enumerate}[(1),wide]
  \item\label{item:le:k.inv.levi:no.h} There is a $\bK$-invariant Levi factor $\bP\xrightarrowdbl{}\bL$ and a $\bK$-invariant $\bP$-opposite parabolic $\bP_{-}$.
  \item\label{item:le:k.inv.levi:h} If $\bK$ furthermore leaves invariant a Cartan/Borel/parabolic chain $\bH\le \bB\le \bP\le \bG$ then $\bL$ and $\bP_{-}$ above can be chosen so as to contain $\bH$. 
  \end{enumerate}
\end{lemma}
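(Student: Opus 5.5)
We prove \Cref{le:k.inv.levi} by reducing everything to averaging/complement arguments available for linearly reductive $\bK$, applied to the Lie algebra and then lifted to groups.

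\textbf{Part \Cref{item:le:k.inv.levi:no.h}.} Since $\bP$ is $\bK$-invariant, so is its unipotent radical $\bU=\cR_u\bP$; consequently $\fu\le \fp$ are $H$-subcomodules in the sense of \Cref{re:comods}. The Levi subgroups of $\bP$ form a single orbit under conjugation by $\bU$, and the conjugation action is simply transitive. Mostow's theorem gives a Levi subgroup, and the equivariant analogue \cite[Proposition 6.1]{MR651417} asserts exactly that, for linearly reductive $\bK$, a $\bK$-invariant Levi factor exists. We therefore plan to cite it directly.

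As a sanity check, here is the mechanism one expects behind it. The set $\mathcal{L}$ of Levi subgroups is a $\bU$-torsor on which $\bK$ acts compatibly (through its action on $\bU$). Filter $\bU$ by its descending central series, whose successive quotients are $\bK$-stable vector groups. At each stage the obstruction to a $\bK$-fixed point lies in $H^1(\bK,\text{vector group})$, and this cohomology vanishes by linear reductivity. Inducting up the filtration produces a $\bK$-fixed point of $\mathcal{L}$, i.e.\ a $\bK$-invariant $\bL$.

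With $\bL$ in hand, take $\bP_-$ to be the unique parabolic opposite $\bP$ containing $\bL$ (\cite[Theorem IV.14.21]{brl}). Uniqueness makes it $\bK$-invariant: for any $k$, the subgroup $k\bP_-$ is again opposite to $k\bP=\bP$ and contains $k\bL=\bL$, so $k\bP_-=\bP_-$. Because $\bK$ is a group scheme, this should be phrased functorially on points, which is harmless.

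\textbf{Part \Cref{item:le:k.inv.levi:h}.} Suppose $\bH\le\bB\le\bP$ are all $\bK$-invariant. The Levi subgroups of $\bP$ containing $\bH$ are unique: each is $C_\bG(\bS)$ for the appropriate subtorus $\bS\le\bH$, namely the connected center of the Levi. Concretely, $\bL=\langle \bH, \bU_\alpha : \alpha\in\Phi_{\bL}\rangle$, where $\Phi_{\bL}$ is the set of roots of $\bH$ on $\fp$ whose negatives are also roots on $\fp$. This set is determined by $(\bH,\bP)$ alone. It follows that $\bL$ is intrinsically attached to the $\bK$-invariant pair $(\bH,\bP)$ and is therefore $\bK$-invariant. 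No appeal to part \Cref{item:le:k.inv.levi:no.h} is needed here. Alternatively, $\fl=\fp\cap\fp_-$ with $\fp_-:=\fh\oplus\bigoplus_{\alpha:\,-\alpha\in\Phi(\fp)}\fg_\alpha$; it should be checked that $\bK$ permutes root spaces, which holds because $\bK$ normalizes $\bH$. The opposite parabolic $\bP_-\supseteq\bL\supseteq\bH$ is then $\bK$-invariant by the same uniqueness argument as before.

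\textbf{Main obstacle.} The step requiring care is the averaging in part \Cref{item:le:k.inv.levi:no.h} for a general, possibly nonreduced or disconnected, group scheme $\bK$. If the cited \cite[Proposition 6.1]{MR651417} does not literally cover group schemes, we would fall back on the Lie-algebra version \cite[Theorem 7.4]{MR3115171}, which provides an $H$-comodule Levi subalgebra $\fl\le\fp$. We would then lift it to the group as $\bL:=N_\bP(\fl)^\circ$, or as the unique connected subgroup with Lie algebra $\fl$, which is legitimate in characteristic zero. Its $\bK$-invariance follows from the uniqueness of this connected subgroup.
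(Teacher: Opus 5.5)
Your proposal is correct and is essentially the paper's argument. In (1) both cite \cite[Proposition 6.1]{MR651417} and deduce $\bK$-invariance of $\bP_{-}$ from uniqueness of the opposite parabolic containing $\bL$. In (2) both use that exactly one Levi factor of $\bP$ contains $\bH$, so it is forced to be $\bK$-stable; you pin it down via the roots $\Phi(\fp)\cap-\Phi(\fp)$, whereas the paper uses Levi subgroups being the centralizers of maximal tori of $\cR\bP$ \cite[Proposition IV.11.23(ii)]{brl}.

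The $H^1$ sanity check and the Lie-algebraic fallback are superfluous. If you did need the fallback, note that a Levi--Malcev-type complement is semisimple (it complements the solvable radical), so you would still have to supply a $\bK$-stable toral part to reach a reductive complement of $\fu$.
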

\begin{proof}
  \begin{enumerate}[label={},wide]
  \item\textbf{\Cref{item:le:k.inv.levi:no.h}} Invariant Levis are provided by \cite[Proposition 6.1]{MR651417}, and the second claim follows from the first: the \emph{unique} \cite[Proposition IV.14.21(i)]{brl} opposite parabolic containing the stipulated $\bK$-invariant Levi factor will (by uniqueness) be $\bK$-invariant.

  \item\textbf{\Cref{item:le:k.inv.levi:h}} Simply observe that Levi groups being precisely the centralizers of the tori maximal in the solvable radical $\cR\bP$ \cite[Proposition IV.11.23(ii)]{brl}, exactly one such contains the given $\bH$; it will then be $\bK$-invariant, whereupon proceed as in \Cref{item:le:k.inv.levi:no.h}. 
  \end{enumerate}
\end{proof}

\begin{construction}\label{con:bunp.poiss.no.bialg}
  \cite[\S 3.1]{MR2396472} proceeds to define a Poisson bivector on the smooth locus $\cat{Bun}_{E,s}^{\bP}$ in the following fashion.
  \begin{itemize}[wide]
  \item The tangent space to (the class of) $\left(\cE\xrightarrowdbl{}E\right)\in \cat{Bun}_{E,s}^{\bP}$ can be identified with $H^1(E,\cE[\fp])$, so the target is a skew-symmetric element of $H^1(E,\cE[\fp])^{\otimes 2}$; equivalently, a map
    \begin{equation}\label{eq:h1past.2.h1p}
      H^1\left(E,\cE[\fp]\right)^*
      \xrightarrow{\quad}
      H^1\left(E,\cE[\fp]\right)
    \end{equation}
    (with skew symmetry appropriately construed in that alternative description).

  \item In the first instance, the exact sequence
    \begin{equation*}
      0\to
      \fp
      \lhook\joinrel\xrightarrow{\quad}
      \fg
      \xrightarrowdbl{\quad}
      \fg/\fp
      \to 0
    \end{equation*}
    induces its analogue for vector bundles $\cE[\bullet]$, hence a morphism
    \begin{equation*}
      H^0(E,\cE[\fg/\fp])
      \xrightarrow{\quad}
      H^1(E,\cE[\fp])
    \end{equation*}
    resulting from the attached long exact cohomology sequence \cite[Theorem III.1.1A]{hrt}.

  \item \emph{Serre duality} \cite[Theorem III.7.1]{hrt} on $E$ identifies that map's domain with
    \begin{equation*}
      H^1\left(E,\cE[\fg/\fp]^*\right)^*
      \cong
      H^1\left(E,\cE\left[(\fg/\fp)^*\right]\right)^*.
    \end{equation*}

  \item This in turn produces a composition
    \begin{equation*}
      H^1\left(E,\cE[\fg/\fu]^*\right)^*
      \xrightarrow{\quad\text{natural map}\quad}
      H^1\left(E,\cE[\fg/\fp]^*\right)^*
      \xrightarrow{\quad}
      H^1(E,\cE[\fp]).
    \end{equation*}

  \item Finally,
    \begin{equation*}
      \left(\fg/\fu\right)^*
      \cong
      \fp
      \xRightarrow{\quad}
      \cE\left[\left(\fg/\fu\right)^*\right]
      \cong
      \cE[\fp]
    \end{equation*}
    via $\Braket{-\mid-}$. 
  \end{itemize}
\end{construction}

On the other hand:

\begin{construction}\label{con:bunp.poiss.bialg}
  The discussion on \cite[p.67]{FO98} takes a slightly different route to defining \Cref{eq:h1past.2.h1p}: it is rather
  \begin{equation*}
    H^1\left(E,\cE\left[\fp\right]\right)^*
    \xrightarrow[\quad\cong\quad]{\quad\text{Serre duality}\quad}
    H^0\left(E,\cE\left[\fp^*\right]\right)
    \xrightarrow[\quad\quad]{\quad\text{long exact sequence}\quad}
    H^1\left(E,\cE[\fp]\right)
  \end{equation*}
  with the second map resulting via $\cE[\bullet]$ from an extension
  \begin{equation}\label{eq:past.by.p.via.bialg}
    0\to
    \fp
    \lhook\joinrel\xrightarrow{\quad}
    \fe
    \xrightarrowdbl{\quad}
    \fp^*
    \to 0
  \end{equation}
  of $\fp$-modules built via Lie-algebra cohomology. Specifically, the class of \Cref{eq:past.by.p.via.bialg} in
    \begin{equation*}
      \Ext_{\fp}\left(\fp^*,\fp\right)
      \xrightarrow[\quad\cong\quad]{\quad}
      H^1\left(\fp,\fp^{\otimes 2}\right)
      \quad
      \left(\text{\cite[Exercise 7.3.5]{weib_halg}}\right)
    \end{equation*}
    is represented by a 1-cocycle $\fp\to \wedge^2 \fp\le \fp^{\otimes 2}$ (exterior square) that the source in question refers to as being (the restriction to $\fp$ of) that inducing the \emph{standard bialgebra structure} $\fg\to \wedge^2 \fg$.

    What the authors intend is presumably something along the lines of \cite[Example 1.3.8]{cp_qg}, describing such standard constructions for symmetrizable Kac-Moody algebras. Loc. cit. reveals the requisite ingredients for the construction to go through:
    \begin{itemize}[wide]
    \item an ad-invariant non-degenerate symmetric bilinear form $\Braket{-\mid -}$ on $\fg$; per the convention adopted here, this will be the only such in place throughout and hence coincide with that employed in \Cref{con:bunp.poiss.no.bialg};

    \item and a Borel subalgebra $\fh\le \fb\le \fp$ with a Cartan algebra therein.
    \end{itemize}
\end{construction}

The crux of the matter in comparing the two approaches, then, lies in the contrast between two $\fp$-module extensions \Cref{eq:plbk.ext} and (the bialgebra-induced) \Cref{eq:past.by.p.via.bialg}.

\begin{remarks}\label{res:derived.duality}
  \begin{enumerate}[(1),wide]
  \item\label{item:res:derived.duality:drv.dual} The cited \cite[Exercise 7.3.5]{weib_halg}, providing the identification $\Ext_{\fg}(M,N)\cong H^1(\fg,\Hom(M,N))$ for modules over a Lie algebra $\fg$, is a particular instance of the following observation (also of some use in bookkeeping some of the connections between various Poisson-structure descriptions). 
    
    Consider a closed symmetric monoidal $(\cC,\otimes,\mathbf{1})$, linear over a field $\Bbbk$, with \emph{internal homs} $[M,N]$. Under appropriate exactness constraints on $(\cC,\otimes,\mathbf{1})$ (valid in familiar examples such as $\Bbbk$-linear categories of modules over groups or Lie algebras, or more generally (co)module categories over Hopf $\Bbbk$-algebras) one can pass back and forth between extensions
    \begin{equation*}
      \cE\quad:\quad
      0\to
      N
      \lhook\joinrel\xrightarrow{\quad}
      E
      \xrightarrowdbl{\quad}
      M
      \to 0
    \end{equation*}
    and
    \begin{equation*}
      \cF\quad:\quad
      0\to
      [M,N]
      \lhook\joinrel\xrightarrow{\quad}
      F
      \xrightarrowdbl{\quad}
      \mathbf{1}
      \to 0
    \end{equation*}
    (identified up to the usual \cite[\S 3.4]{weib_halg} notion of equivalence).  
    \begin{itemize}[wide]
    \item Given $\cE$, take for $F$ the pullback
      \begin{equation}\label{eq:f1ennn}
        \begin{tikzpicture}[>=stealth,auto,baseline=(current  bounding  box.center)]
          \path[anchor=base] 
          (0,0) node (l) {$F$}
          +(2,.5) node (u) {$[E,N]$}
          +(2,-.5) node (d) {$\mathbf{1}$}
          +(4,0) node (r) {$[N,N]$}
          ;
          \draw[->] (l) to[bend left=6] node[pos=.5,auto] {$\scriptstyle $} (u);
          \draw[->] (u) to[bend left=6] node[pos=.5,auto] {$\scriptstyle [-,N]$} (r);
          \draw[->] (l) to[bend right=6] node[pos=.5,auto,swap] {$\scriptstyle $} (d);
          \draw[->] (d) to[bend right=6] node[pos=.5,auto,swap] {$\scriptstyle $} (r);
        \end{tikzpicture}
      \end{equation}
      with the bottom right-hand map being that attached by the adjunction $-\otimes N\dashv [N,-]$ to $\id_N$ (``$\dashv$'''s tail pointing towards the left adjoint, in not-uncommon notation \cite[Definition 19.3]{ahs}). 

    \item Conversely, $\cF$ in hand, take for $\cE$ the pushout
      \begin{equation*}
        \begin{tikzpicture}[>=stealth,auto,baseline=(current  bounding  box.center)]
          \path[anchor=base] 
          (0,0) node (0l) {$0$}
          +(2,.5) node (ul) {$[M,N]\otimes M$}
          +(4.5,.7) node (um) {$F\otimes M$}
          +(6,0) node (r) {$M$}
          +(2,-.5) node (dl) {$N$}
          +(4.5,-.7) node (dm) {$E$}
          +(7,0) node (0r) {$0$}
          ;
          \draw[->] (0l) to[bend left=6] node[pos=.5,auto] {$\scriptstyle $} (ul);
          \draw[->] (0l) to[bend right=6] node[pos=.5,auto] {$\scriptstyle $} (dl);
          \draw[->] (r) to[bend right=0] node[pos=.5,auto] {$\scriptstyle $} (0r);
          \draw[->] (um) to[bend left=6] node[pos=.5,auto] {$\scriptstyle $} (r);
          \draw[->] (dm) to[bend right=6] node[pos=.5,auto] {$\scriptstyle $} (r);
          \draw[->] (ul) to[bend left=0] node[pos=.5,auto] {$\scriptstyle $} (dl);
          \draw[->] (um) to[bend left=0] node[pos=.5,auto] {$\scriptstyle $} (dm);
          \draw[->] (ul) to[bend left=6] node[pos=.5,auto] {$\scriptstyle $} (um);
          \draw[->] (dl) to[bend right=6] node[pos=.5,auto] {$\scriptstyle $} (dm);
        \end{tikzpicture}
      \end{equation*}
      of $\cF\otimes M$ along the left-hand vertical map (canonical, internal evaluation: the $M$-component of the counit of the adjunction $-\otimes M\dashv [M,-]$).
    \end{itemize}
    
    This all applies in particular when $\cC$ is \emph{rigid} \cite[Definition 2.10.11]{egno} (so that $[M,-]\cong -\otimes M^*$) with exact tensor products $M\otimes-$ and $-\otimes N$, finite-dimensional $\fg$- or $\bG$-modules as the familiar examples.

  \item\label{item:res:derived.duality:which.conds} It will be clear from \Cref{item:res:derived.duality:drv.dual} above what the nature of the exactness requirements on $\cC$ alluded to must be:
    \begin{itemize}[wide]
    \item for the passage $\cE \rightsquigarrow \cF$ one would need $[-,N]$ exact (so that the upper right-hand arrow in \Cref{eq:f1ennn} is epic) as well as $\mathbf{1}\to [N,N]$ monic;
    \item in $\cF\rightsquigarrow \cE$, on the other hand, the exactness of $-\otimes M$ is taken for granted.
    \end{itemize}
    As observed, these are automatic in the usual representation categories (e.g. over Hopf algebras) over fields. There is a dual approach to $\cE\rightsquigarrow \cF$: one can take for $F$ the pullback
    \begin{equation*}
      \begin{tikzpicture}[>=stealth,auto,baseline=(current  bounding  box.center)]
        \path[anchor=base] 
        (0,0) node (l) {$F$}
        +(2,.5) node (u) {$[M,E]$}
        +(2,-.5) node (d) {$\mathbf{1}$}
        +(4,0) node (r) {$[M,M]$;}
        ;
        \draw[->] (l) to[bend left=6] node[pos=.5,auto] {$\scriptstyle $} (u);
        \draw[->] (u) to[bend left=6] node[pos=.5,auto] {$\scriptstyle [M,-]$} (r);
        \draw[->] (l) to[bend right=6] node[pos=.5,auto,swap] {$\scriptstyle $} (d);
        \draw[->] (d) to[bend right=6] node[pos=.5,auto,swap] {$\scriptstyle $} (r);
      \end{tikzpicture}
    \end{equation*}
    it is now the exactness of $[M,-]$ that is needed.

  \item\label{item:res:derived.duality:kom.adj} Exactness aside, \Cref{item:res:derived.duality:drv.dual} does provide an adjunction between two appropriately defined categories:
    \begin{equation*}
      \begin{tikzpicture}[>=stealth,auto,baseline=(current  bounding  box.center)]
        \path[anchor=base] 
        (0,0) node (l) {$\cat{Kom}(\mathbf{1},[M,N])$}
        +(6,0) node (r) {$\cat{Kom}(M,N)$,}
        +(3,0) node () {$\bot$}
        ;
        \draw[->] (l.north east) to[bend left=16] node[pos=.5,auto] {$\scriptstyle \cF\mapsto \cE_{\cF}:=\cE$} (r.north west);
        \draw[->] (r.south west) to[bend left=16] node[pos=.5,auto] {$\scriptstyle \cF=:\cF_{\cE}\mapsfrom \cE$} (l.south east);
      \end{tikzpicture}
    \end{equation*}
    having defined 
    \begin{equation*}
      X,Y\in \cC\quad:\quad
      \cat{Kom}(X,Y)
      :=
      \left\{\text{complexes }Y\to E\to X\right\},
    \end{equation*}
    with complex morphisms restricting to $\id_X$ and $\id_Y$ on the extreme terms for arrows.
  \end{enumerate}
\end{remarks}

It will be instructive to analyze the two extreme cases.

\begin{example}\label{ex:pg}
  For $\bP:=\bG$ the top row in \Cref{eq:plbk.ext} will be a trivial extension of $\fg$-modules, for it factors through $\fu^*=\{0\}$. On the other hand, \Cref{eq:past.by.p.via.bialg} is the extension of $\fg^*$ by $\fg$ employed in \cite[Example 1.3.8]{cp_qg} in embedding $\fg$ into the \emph{Manin triple} \cite[Definition 1.3.3]{cp_qg} structure
  \begin{equation*}
    \begin{aligned}
      (\fa,\fa_0,\fa_1)
      &:=\left(\fg\times \fg,\Delta\fg,\fq\right)
        \cong\left(\fg\times \fg,\fg,\fg^*\right)
      \\
      \Delta&:=\text{diagonal map}\\
      \fq&:=\left\{(a,b)\in \fb_-\times \fb\ :\ a+b\text{ has 0 component in the Cartan }\fh\le \fb\right\}:
    \end{aligned}
  \end{equation*}
  this last claim follows from \Cref{le:bialg.as.1cocyc}, given that the Lie subalgebra structure on $\fq\le \fa$ and the identification $\fq\cong \fg^*$ via the invariant inner product
  \begin{equation*}
    \Braket{(a,b)\mid (a',b')}_{\fa}
    :=
    \Braket{a\mid a'}-\Braket{b\mid b'}
  \end{equation*}
  on $\fa$ are precisely those used to make $\fg^*$ into a Lie algebra and hence give $\fg$ its standard Lie coproduct $\fg\xrightarrow{\delta}\wedge^2 \fg\le \fg^{\otimes 2}$. $\fa$ indeed splits as an extension of $\fg$-modules (so we do have \Cref{eq:past.by.p.via.bialg} $\cong$ \Cref{eq:plbk.ext} in this case): the \emph{anti}diagonal $\{(a,-a)\ :\ a\in \fg\}\le \fa$ is a $\fg$-module supplementing $\Delta\fg$. 
\end{example}

Recall \cite[post Theorem 4.6]{grv_cls-gp-geom-alg_2002} that an \emph{isotropic} subspace $W\le V$ of a space equipped with a bilinear form $\Braket{-\mid -}$ is one with $\Braket{W\mid W}\equiv 0$ and that the \emph{classical double} \cite[\S 1.4B]{cp_qg}
\begin{equation}\label{eq:cls.dbl}
  0\to
  \fg
  \lhook\joinrel\xrightarrow{\quad}
  \cD(\fg)
  \xrightarrowdbl{\quad}
  \fg^*
  \to 0
\end{equation}
of the finite-dimensional Lie bialgebra $(\fg,\delta)$ over a field $\Bbbk$ (of characteristic $\ne 2$, to avoid symmetric/exterior-square issues) for a 1-cocycle
\begin{equation}\label{eq:curried.delta}
  \delta
  \in
  \Hom\left(\fg,\fg^{\otimes 2}\right)
  \xrightarrow[\quad\cong\quad]{\quad\text{dualize second tensorand}\quad}
  \Hom\left(\fg,\Hom\left(\fg^*,\fg\right)\right)
\end{equation}
is
\begin{itemize}[wide]
\item $\cD(\fg):=\fg\times \fg^*$ with the summands $\fg,\fg^*\le \cD(\fg)$ embedded as Lie subalgebras;
\item with $\cD(\fg)$ equipped with the unique Lie-algebra structure admitting such embeddings and leaving the symmetric non-degenerate bilinear form
  \begin{equation*}
    \Braket{-\mid-}
    \in
    \Hom\left(S^2 \cD(\fg),\Bbbk\right)
    \le
    \Hom\left(\cD(\fg)^{\otimes 2},\Bbbk\right)
    ,\quad
    \left[
      \begin{gathered}
        \Braket{-\mid -}|_{\fg\times \fg^*\oplus \fg^*\times \fg}:=\text{natural pairing}\\
        \fg,\ \fg^*\ \le\  \cD(\fg)\ \text{$\Braket{-\mid-}$-isotropic}\\
        S^2\bullet:=\text{symmetric square}
      \end{gathered}
    \right.
  \end{equation*}
  invariant. The construction functions without the finite dimensionality assumption, regarding $\delta$ as an element of the right-hand side of \Cref{eq:curried.delta}. 
\end{itemize}
\Cref{le:bialg.as.1cocyc}, taken for granted in \Cref{ex:pg}, is presumably folklore; an argument is included for completeness (cf. the direct computational \cite[proof of Lemma 1.3.5]{cp_qg} via dual bases in the finite-dimensional case and, in the same spirit, \cite[Remark 2.15]{MR4214399}). 

\begin{lemma}\label{le:bialg.as.1cocyc}
  Let $\left(\fg,\delta\right)$ be a Lie $\Bbbk$-bialgebra, $\mathrm{char}(\Bbbk)\ne 2$. The extension of $\fg^*$ by $\fg$ associated to $\delta$ through
  \begin{equation*}
    \Ext^1_{\fg}(\fg^*,\fg)
    \cong
    H^1(\fg,\Hom(\fg^*,\fg))
    \left(      
      \cong
      H^1\left(\fg,\fg^{\otimes 2}\right)
      \text{ if }
      \dim\fg<\infty
    \right)
  \end{equation*}
  is precisely the classical double \Cref{eq:cls.dbl}.
\end{lemma}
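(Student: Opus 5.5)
The plan is to compute the cocycle of the classical double directly, by choosing the $\fg$-linear section of $\cD(\fg)\to\fg^*$ that is forced by the bilinear form. Under the correspondence of \Cref{res:derived.duality}\Cref{item:res:derived.duality:drv.dual}, an extension $0\to\fg\to D\to\fg^*\to 0$ of $\fg$-modules corresponds to a class in $H^1(\fg,\Hom(\fg^*,\fg))$. The class is obtained concretely as follows.

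\begin{enumerate}[(1),wide]
\item Choose a $\Bbbk$-linear section $s:\fg^*\to D$.
\item Form the 1-cocycle
  \begin{equation*}
    c(x)(\xi):=x\cdot s(\xi)-s(x\cdot\xi)\in\fg,\qquad x\in\fg,\ \xi\in\fg^*.
  \end{equation*}
  Changing $s$ by some $f\in\Hom(\fg^*,\fg)$ alters $c$ by the coboundary $x\mapsto x\cdot f$.
\end{enumerate}

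Apply this with $D=\cD(\fg)$ and $s$ the inclusion of the summand $\fg^*\le\cD(\fg)=\fg\times\fg^*$. Then $c(x)(\xi)$ is just the $\fg$-component of the bracket $[x,\xi]_{\cD}$. This uses that the $\fg^*$-component of $[x,\xi]_{\cD}$ is the coadjoint action $x\cdot\xi$, which is exactly the statement that $\cD(\fg)\to\fg^*$ is a $\fg$-module map, and so that \Cref{eq:cls.dbl} is an extension of $\fg$-modules at all. I would verify this first from invariance of $\Braket{-\mid-}$ on $\cD(\fg)$ and isotropy of $\fg$. For $y\in\fg$,
\begin{equation*}
  \Braket{[x,\xi]\mid y}=-\Braket{\xi\mid[x,y]}=\Braket{\mathrm{ad}^*_x\xi\mid y}.
\end{equation*}
Since $\fg$ pairs trivially with itself, this pins down the $\fg^*$-component of $[x,\xi]$ as $\mathrm{ad}^*_x\xi$.

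To identify the $\fg$-component, pair it against $\eta\in\fg^*$, using invariance together with isotropy of $\fg^*$:
\begin{equation*}
  \Braket{[x,\xi]\mid\eta}=\Braket{x\mid[\xi,\eta]_{\fg^*}}=\Braket{\delta(x)\mid\xi\otimes\eta},
\end{equation*}
since the bracket on $\fg^*$ is dual to $\delta$. Hence $c(x)(\xi)=(\xi\otimes\id)\delta(x)$, possibly up to a sign or a choice of which tensorand is dualized. This is precisely the currying \Cref{eq:curried.delta} of $\delta$. In the finite-dimensional case the identification $\Hom(\fg^*,\fg)\cong\fg^{\otimes 2}$ then returns $\delta$ itself.

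To conclude, note that the map $\Ext^1\to H^1$ sending an extension to this cocycle is the inverse of the bijection of \Cref{res:derived.duality}. Concretely, the pushout construction there applied to $\cF_{\cE}$ recovers $\cE$, and the pullback $F$ in \Cref{eq:f1ennn} splits as a vector space with splitting datum $c$. So the class attached to $\delta$ is the class of $\cD(\fg)$. In the infinite-dimensional case $\fg^*$ need not be a Lie algebra dual to $\delta$ in the naive sense; I would then define $\cD(\fg)$ directly via the curried $\delta$ and run the same computation, which only uses pairings of $\fg$ against $\fg^*$.

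The main obstacle is the bookkeeping of signs and conventions. One has to check that the chosen orientation of currying (second versus first tensorand), together with the sign conventions for $\Ext^1\cong H^1$ (cocycle $x s-s x$ versus its negative), match, so that the class is $\delta$ and not $-\delta$. If a sign discrepancy appears, I would absorb it via the isomorphism $\xi\mapsto-\xi$ on $\fg^*$, which is compatible with all structure up to equivalence, or note that antisymmetry of $\delta\in\wedge^2\fg$ makes the two curryings differ by exactly this sign. The cocycle condition for $c$ is automatic from the Jacobi identity in $\cD(\fg)$, so nothing beyond these pairing computations should be needed.
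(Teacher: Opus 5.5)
Your proposal is correct and is essentially the paper's proof: the paper also uses the defining summand $\fg^*\le\cD(\fg)$ as splitting, so the representing cocycle is $x\mapsto\left(y\mapsto[x,y]_{\fg}\right)$, and identifies it with the curried $\delta$ by the single invariance computation $\Braket{[x,y]_{\fg}\mid z}=\Braket{x\mid[y,z]}=\Braket{\delta(x)\mid y\otimes z}$. Your check that the $\fg^*$-component of $[x,\xi]$ is the coadjoint action, and your sign bookkeeping, are details the paper leaves implicit; the convention its proof uses, $\Braket{\delta(x)(y)\mid z}=\Braket{\delta(x)\mid y\otimes z}$, agrees with your $(\xi\otimes\id)\delta(x)$, so no sign needs absorbing.
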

\begin{proof}

  Having fixed the defining splitting $\cD(\fg):=\fg\times \fg^*$, a representing 1-cocycle is given by
  \begin{equation*}
    \fg\ni
    x\xmapsto{\quad}
    \left(
      \fg^*\ni y
      \xmapsto{\quad}
      \left[x,y\right]_{\fg}
      \in \fg\le \cD(\fg)
    \right),\quad
    \bullet_{\fg}:=\text{$\fg$-summand}. 
  \end{equation*}
  That this recovers \Cref{eq:curried.delta} is virtually tautological:
  \begin{equation*}
    \Braket{\left[x,y\right]_{\fg}\mid z\in \fg^*}
    \xlongequal[\quad\Braket{-\mid-}\text{ invariance}\quad]{\quad}
    \Braket{x\mid [y,z]}
    =
    \Braket{\delta(x)(y)\mid z},
  \end{equation*}
  hence the conclusion.
\end{proof}

\begin{example}\label{ex:pb}  
  At the other end of the spectrum (as compared to \Cref{ex:pg}), consider the minimal choice $\fp:=\fp_0=\fb$. We can again identify \Cref{eq:past.by.p.via.bialg} and \Cref{eq:plbk.ext}, by producing coincident classifying cocycles. 

  To describe one cocycle whose class in $H^1_{\fb}(\fb^*,\fb)$ classifies \Cref{eq:past.by.p.via.bialg}:
  \begin{itemize}[wide]
  \item in \Cref{ex:pg}'s Manin triple $(\fa,\fa_0,\fa_1)=\left(\fg^2,\Delta\fg,\fq\right)$, operate with $x\in \fb\cong \Delta\fb$ on elements $y\in \fq\cong \fg^*$;
  \item note that that action in fact factors through $\fg^*\xrightarrowdbl{}\fb^*$, for the annihilator of $\fb\cong \Delta\fb$ in $\fq$ with respect to $\Braket{-\mid -}_{\fa}$ is an ideal in $\fq$;
  \item evaluate the $\left(\fg\cong \Delta\fg\right)$-component $z$ of the result $[x,y]\in \fa$ in the decomposition $\fa=\Delta\fg\oplus \fq$;
  \item that component will lie in $\fb$. 
  \end{itemize}
  Because the identification $\fq\cong \fg^*$ further descends to
  \begin{equation*}
    \fb^*
    \cong
    \fq/\left\{(0,z)\ :\ z\in \fu\right\}
    \cong
    \fb_{-}
    \cong
    \left\{(x,y)\in \fb_{-}\times \fh\ :\ x+y\in \fu_{-}\right\},
  \end{equation*}
  the description just given is also how one would produce a cocycle classifying the top-row extension \Cref{eq:plbk.ext} (having first split $\fg$, as a $\fb$-module extension of $\fu^*$ by $\fb$, by identifying $\fu^*$ with $\fu_{-}$ via $\Braket{-\mid -}$):
  \begin{equation*}
    \fb\ni x
    \xmapsto{\quad}
    \left(
      \fb^*\cong \fb_{-}\ni y
      \xmapsto{\quad}
      \left[x,y_{\fu_{-}}\right]_{\fb}
      \in \fb
    \right),
  \end{equation*}
  where subscripts denote components in the decomposition $\fg=\fu\oplus \fh\oplus \fu_{-}=\fb\oplus \fu_{-}$. 
\end{example}

\Cref{ex:pb} extends to confirm that the two Poisson-structure constructions do indeed coincide regardless of the choice of parabolic group $\bP$. 

\pf{th:same.poiss}
\begin{th:same.poiss}
  As indicated preceding the statement, the argument is an adaptation of \Cref{ex:pb}. The auxiliary objects featuring in the proof, such as the Levi factor $\bL\le \bP$ and the opposite parabolic $\bP_{-}\ge \bL$, will all be $\bK$ invariant because chosen as \Cref{le:k.inv.levi} permits; this applies also to downstream notions (such as the Lie algebras $\fq_{\fp,\fl}$).
  
  A classifying cocycle for \Cref{eq:past.by.p.via.bialg}, being obtained by restriction from one over $\fg$ classifying \Cref{eq:cls.dbl}, will be cohomologous to 
  \begin{equation}\label{eq:1cocyc.dscr}    
    \left(\Delta\fp\le \fa\right)
    \cong
    \fp\ni
    x
    \xmapsto{\quad}
    \left(
      \fg^*\cong \ker\pi\ni
      y
      \xmapsto{\quad}
      \pi[x,y]
      \in \fg
    \right),
  \end{equation}
  where
  \begin{equation*}
    \cD(\fg)
    \xrightarrow[\quad\cong\quad]{\quad\text{isomorphism of $\fg^*$-by-$\fg$ extensions}\quad}
    \cong
    \fa
    \xrightarrowdbl{\quad\pi\quad}
    \fg
  \end{equation*}
  is a (vector-space, not $\fp$- or $\fg$-module) splitting of $\fg\lhook\joinrel\to \cD(\fg)\cong \fa$. A judicious choice of splitting will help: interpolating between the anti-diagonal copy of $\fg$ chosen in \Cref{ex:pg} and $\fq$ itself, driving \Cref{ex:pb}, set
  \begin{equation*}
    \ker\pi:=\fq_{\fp,\fl}
    :=
    \left\{(x,y)\in \fp_-\times \fp\ :\ x+y\in \fu_{-}\oplus \fu\le \fu_{-}\oplus \fl\oplus \fu\right\}
  \end{equation*}
  for the fixed Levi subalgebra $\fl:=Lie(\bL)\le \fp$, so that $\fq_{\fg,\fg}$ is the anti-diagonal $\fg\le \fa$ and $\fq_{\fb,\fh}=\fq$.   

  The annihilator of $\fp\cong \Delta\fp\le \fa$ in $\fq_{\fp,\fl}$ is $\{(0,y)\ :\ y\in \fu\}$, so that $\fg^*\cong \fq_{\fp,\fl}$ induces
  \begin{equation*}
    \fp^*
    \cong
    \fq_{\fp,\fl}/\left\{(0,y)\ :\ y\in \fu\right\}
    \cong
    \left\{(x,y)\in \fp_{-}\times \fl\ :\ x+y\in \fu_{-}\right\}
    \cong
    \fp_{-}. 
  \end{equation*}
  \Cref{eq:1cocyc.dscr} can thus be recovered as
  \begin{equation}\label{eq:alt.desc.1cocyc}
    \fp\ni
    x
    \xmapsto{\quad}
    \left(
      \fp^*
      \cong
      \fp_{-}
      \ni
      y
      \xmapsto{\quad}
      \left[x,y_{\fu_{-}}\right]_{\fp}
      \in \fp
    \right),
  \end{equation}
  with subscripts denoting the respective components in $\fg=\fu\oplus \fl\oplus \fu_{-}=\fp\oplus \fu_{-}$. Splitting \Cref{eq:plbk.ext}'s $\fg\xrightarrowdbl{}\fu^*$ by identifying the latter with $\fu_{-}$, \Cref{eq:alt.desc.1cocyc} plainly classifies the pullback extension $\fe'$.
\end{th:same.poiss}

An immediate consequence of \Cref{th:same.poiss}:

\begin{corollary}\label{cor:hence.ell.curve}
  If in addition $E$ is an elliptic $\Bbbk$-curve, \Cref{con:bunp.poiss.no.bialg} and \Cref{con:bunp.poiss.bialg} describe the same Poisson structure on the smooth locus $\cat{Bun}_{E,s}^{\bP}$.  \qedhere
\end{corollary}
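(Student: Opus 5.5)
The plan is to reduce \Cref{cor:hence.ell.curve} entirely to \Cref{th:same.poiss} (taking $\bK$ trivial, or keeping it; equivariance plays no role here) together with the functoriality of the associated-bundle construction $\cE[\bullet]$. First I will put both constructions in a common form. \Cref{con:bunp.poiss.bialg} is already a composite
\begin{equation*}
  H^1\left(E,\cE[\fp]\right)^*
  \cong
  H^0\left(E,\cE[\fp^*]\right)
  \xrightarrow{\ \partial_{\fe}\ }
  H^1\left(E,\cE[\fp]\right),
\end{equation*}
where $\partial_{\fe}$ is the connecting map of $0\to\cE[\fp]\to\cE[\fe]\to\cE[\fp^*]\to 0$. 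I will rewrite \Cref{con:bunp.poiss.no.bialg} the same way, using the top row $\fe'$ of \Cref{eq:plbk.ext} in place of $\fe$.

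To do this, note that $\cE[\bullet]$ is exact and commutes with duals, so the pullback diagram \Cref{eq:plbk.ext} yields a morphism of short exact sequences of vector bundles from $\cE[\fe']$ to $\cE[\fg]$. This morphism is the identity on $\cE[\fp]$ and equals $\cE[\fp^*]\to\cE[\fu^*]\cong\cE[\fg/\fp]$ on the quotients. Naturality of connecting homomorphisms then gives
\begin{equation*}
  \partial_{\fe'}=\partial_{\fg}\circ\left(H^0(E,\cE[\fp^*])\to H^0(E,\cE[\fg/\fp])\right).
\end{equation*}
The remaining check is that the Serre-duality and $\Braket{-\mid-}$ identifications in \Cref{con:bunp.poiss.no.bialg} match this. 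There, $(\fg/\fu)^*\cong\fp$ via the form, so $H^1(E,\cE[\fg/\fu]^*)^*\cong H^1(E,\cE[\fp])^*\cong H^0(E,\cE[\fp^*])$. Moreover, the "natural map'' $H^1(E,\cE[(\fg/\fu)^*])^*\to H^1(E,\cE[(\fg/\fp)^*])^*$ is Serre-dual to the map induced by $\fg/\fp\cong\fu^*\hookrightarrow$ (dual of $\fg/\fu\twoheadrightarrow\fg/\fp$). That map is, by transposition, the surjection $\fp^*\twoheadrightarrow\fu^*$ composed with the form identification. This is a bookkeeping step. Serre duality is natural in the bundle, so it reduces to checking on Lie algebras that the composite
\begin{equation*}
  \fp\cong(\fg/\fu)^*\to(\fg/\fp)^*
\end{equation*}
is dual to $\fp^*\to\fu^*\cong\fg/\fp$. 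Both are restriction of $\Braket{-\mid-}$ to $\fp\times\fu$, read in the two directions.

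Granting this, the bivector of \Cref{con:bunp.poiss.no.bialg} is $\partial_{\fe'}$ precomposed with Serre duality, and that of \Cref{con:bunp.poiss.bialg} is $\partial_{\fe}$ precomposed with Serre duality. \Cref{th:same.poiss} provides an isomorphism of $\fp$-module extensions $\fe\cong\fe'$ restricting to the identities on $\fp$ and $\fp^*$. Such an isomorphism is $\bP$-equivariant, since $\bP$ is connected and the modules integrate to $\bP$; this integrability should be remarked on, though it can be sidestepped by working with the extension classes in $\Ext$. Applying $\cE[\bullet]$ therefore gives isomorphic bundle extensions, hence equal connecting maps, hence equal bivectors at every point of $\cat{Bun}_{E,s}^{\bP}$.

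I expect the main obstacle to be the middle step: tracking signs and transposes when the Serre-duality identifications are matched with the form identifications $\fu^*\cong\fg/\fp$ and $(\fg/\fu)^*\cong\fp$. A stray sign here would only rescale the bivector by $-1$, so I would fix conventions once, namely Serre duality via the trace pairing with a fixed trivialization of $\omega_E$, and verify the square at the level of fibres.
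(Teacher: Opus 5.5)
Your proposal is correct and follows the paper's route: the paper records the corollary as immediate from \Cref{th:same.poiss}, since both bivectors are Serre duality followed by the connecting map of $\cE[\bullet]$ applied to $\fe'$ and $\fe$ respectively, and your naturality argument simply spells out the identification of \Cref{con:bunp.poiss.no.bialg} with the $\fe'$-connecting map, which the paper leaves implicit. One slip in that bookkeeping: $\Braket{\fp\mid\fu}=0$ (this is exactly why $(\fg/\fp)^*\cong\fu$ and $(\fg/\fu)^*\cong\fp$), so the two mutually transpose maps are governed by the form on $\fu\times\fg/\fp$ (equivalently $\fu\times\fu_-$), not on $\fp\times\fu$; also, at the Lie-algebra level the natural map runs $(\fg/\fp)^*\hookrightarrow(\fg/\fu)^*$, and it only reverses direction after applying $H^1(E,\cE[\bullet])^*$.
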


\Cref{th:same.poiss} affords one verification of the claim made in passing in \cite[discussion post Theorem 1, p.67]{FO98} that the symplectic leaves of the resulting Poisson structure on $\cat{Bun}_{E,s}^{\bP}$ are precisely the loci of $\bP$-bundles mutually isomorphic as $\bG$-bundles. 

\begin{corollary}\label{cor:fo.parab.sympl.lvs}
  The symplectic leaves of \Cref{con:bunp.poiss.bialg}'s Poisson structure on $\cat{Bun}_{E,s}^{\bP}$ are the fibers of $\cat{Bun}_{E,s}^{\bP}\to \cat{Bun}_{E}^{\bG}$. 
\end{corollary}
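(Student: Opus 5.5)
By \Cref{cor:hence.ell.curve} it is enough to handle \Cref{con:bunp.poiss.no.bialg}'s description of the Poisson structure. There the anchor map \Cref{eq:h1past.2.h1p} factors as
\begin{equation*}
  H^1\left(E,\cE[\fp]\right)^*\cong H^1\left(E,\cE[(\fg/\fu)^*]\right)^*
  \xrightarrow{\quad}
  H^1\left(E,\cE[(\fg/\fp)^*]\right)^*
  \cong
  H^0\left(E,\cE[\fg/\fp]\right)
  \xrightarrow{\quad\partial\quad}
  H^1\left(E,\cE[\fp]\right).
\end{equation*}
The first map is dual to the injection $H^1(E,\cE[\fu])\to H^1(E,\cE[\fp])$ induced by $\fu\le\fp$, up to the $\Braket{-\mid-}$-identifications $(\fg/\fu)^*\cong\fp$ and $(\fg/\fp)^*\cong\fu$. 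On smooth points we may assume $H^0(E,\cE[\fp])$ is the expected one (or use the standard unobstructedness on an elliptic curve), so this first map is surjective. The image of the Poisson tensor, i.e.\ the tangent distribution to the symplectic leaves, is therefore $\operatorname{im}\partial$. Here $\partial$ is the connecting map of $0\to\cE[\fp]\to\cE[\fg]\to\cE[\fg/\fp]\to0$.

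By exactness of the long cohomology sequence, $\operatorname{im}\partial=\ker\left(H^1(E,\cE[\fp])\to H^1(E,\cE[\fg])\right)$. Let $\cE_{\bG}:=\cE\times^{\bP}\bG$ denote the induced $\bG$-bundle; its adjoint bundle is exactly $\cE[\fg]$. The map $H^1(E,\cE[\fp])\to H^1(E,\cE[\fg])$ is then the differential at $[\cE]$ of the extension-of-structure morphism $\cat{Bun}_{E,s}^{\bP}\to\cat{Bun}_E^{\bG}$. Hence at every point the characteristic distribution of the Poisson structure equals the tangent space to the fiber through that point. This fiber-tangent space is the kernel of the differential; it is computed by deformation theory, and is valid for the stack, or after restricting to loci where the $\bG$-side is well behaved.

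It remains to upgrade this pointwise equality to the statement about leaves. The fibers are reduced connected components of the preimages in the smooth locus, and a connected integral submanifold of the characteristic distribution that is maximal is a leaf. I therefore need each fiber to be smooth of the expected dimension and connected. Smoothness follows from the constancy of $\dim\ker d$ along the fiber, since $\dim \operatorname{im}\partial=h^0(\cE[\fg/\fp])-h^0(\cE[\fg])+h^0(\cE[\fp])$ is locally constant on the fiber. For connectedness, I would identify the fiber over $\cF\in\cat{Bun}^{\bG}_E$ with the space of reductions of $\cF$ to $\bP$, that is, sections of $\cF/\bP\to E$, modulo $\Aut(\cF)$. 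Connectedness of this space, after quotienting by the possibly disconnected $\Aut(\cF)$, is the step I expect to be the main obstacle. If the statement is intended in the weaker sense that leaves are the connected components of the fibers, this step disappears. Otherwise I would argue via the degree of the reduction (its topological type) together with the fact that the smooth locus keeps the Harder--Narasimhan type fixed, so that each fiber is a single orbit-type stratum.
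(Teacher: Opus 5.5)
The gap is the claim that the first map of your factorization is surjective. Your first step, reducing to \Cref{con:bunp.poiss.no.bialg} via \Cref{cor:hence.ell.curve}, is also the paper's. That first map is dual to $\iota\colon H^1(E,\cE[\fu])\to H^1(E,\cE[\fp])$. From $0\to\cE[\fu]\to\cE[\fp]\to\cE[\fl]\to 0$ you get $\ker\iota\cong\operatorname{coker}\left(H^0(E,\cE[\fp])\to H^0(E,\cE[\fl])\right)$. Since $\bL$ centralizes $\fz(\fl)$, we have $H^0(E,\cE[\fl])\supseteq\fz(\fl)\supsetneq\fz(\fg)$ whenever $\bP\ne\bG$. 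So exactly at the points you care about, where $H^0(E,\cE[\fp])$ is as small as possible (namely $\fz(\fg)$), $\iota$ is \emph{not} injective. Neither smoothness nor unobstructedness bears on this.

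This is not a removable technicality. Write the anchor as $\Pi=\partial\circ S\circ\iota^*$, with $S$ Serre duality. Skew-symmetry then gives $\operatorname{im}\Pi=\operatorname{im}\Pi^*\subseteq\operatorname{im}\iota=\ker\left(H^1(E,\cE[\fp])\to H^1(E,\cE[\fl])\right)$. That kernel is the tangent space to the fibre of $\cat{Bun}^{\bP}_{E,s}\to\cat{Bun}^{\bL}_E$, and $\operatorname{im}\partial$ is in general not contained in it. For instance, take $\bG=GL_2$, $\bP=\bB$, and $\cO\oplus\cO$ with the line subbundle $L\in\mathrm{Pic}^{-2}(E)$ embedded by a basis of $H^0(L^{-1})$. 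The automorphisms are scalars. The fibre over $\cO\oplus\cO$ is a copy of $\mathrm{Pic}^{-2}(E)$, with one $\bB$-structure for each $L$, so $\operatorname{im}\partial$ is $1$-dimensional. But that direction moves the torus bundle $(L,L^{-1})$, so $\operatorname{im}\partial\cap\operatorname{im}\iota=0$ and $\Pi$ vanishes at that point. Your pointwise identification of the characteristic distribution with $\operatorname{im}\partial$ is therefore false.

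The missing ingredient is what the paper imports from Balduzzi:
\begin{enumerate}[(1)]
\item the fibres of $\cat{Bun}^{\bP}_{E,s}\to\cat{Bun}^{\bL}_E$ are Poisson submanifolds (his Lemma 3.4);
\item inside each such fibre, the leaves are the loci of $\bP$-bundles with isomorphic associated $\bG$-bundles (his Corollary 3.7, which also settles the connectedness/integrability issue you leave open);
\item leaves of a Poisson submanifold are leaves of the ambient Poisson manifold.
\end{enumerate}
So the leaves one actually obtains, and what the paper's argument delivers, are the loci of $\bP$-bundles with the same $\bG$-bundle \emph{within} a fibre over $\cat{Bun}^{\bL}_E$. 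A correct version of your direct approach would have to prove $\operatorname{im}\Pi=\operatorname{im}\partial\cap\operatorname{im}\iota$, not $\operatorname{im}\Pi=\operatorname{im}\partial$. Separately, your claim that $\dim\operatorname{im}\partial$ is locally constant along a fibre is unsupported: you have not shown $h^0(E,\cE[\fg/\fp])$ to be constant there.
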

\begin{proof}
  This is so for \Cref{con:bunp.poiss.no.bialg} instead by \cite[Corollary 3.7]{MR2396472}: that result applies to the fibers of structure-group compression the map $\cat{Bun}^{\bP}_{E,s}\xrightarrow{\bullet/\bU}\cat{Bun}^{\bL}_E$, but those fibers are proven Poisson submanifolds in \cite[Lemma 3.4]{MR2396472}, so \emph{their} symplectic leaves are global leaves \cite[Proposition 8.2]{cfm_lec-poiss_2021}. The conclusion follows from \Cref{th:same.poiss}.
\end{proof}


\addcontentsline{toc}{section}{References}

\def\polhk#1{\setbox0=\hbox{#1}{\ooalign{\hidewidth
  \lower1.5ex\hbox{`}\hidewidth\crcr\unhbox0}}}


\Addresses

\end{document}